\newtheorem{thm}{Theorem}
\newcommand*{\Ln}{\text{Ln}}
\newtheorem*{remark}{\textnormal{\textbf{Remark}}}
\begin{document}

\title{Semistability of complex balanced kinetic systems with arbitrary time delays}

\author[1,2]{György Lipták}
\author[1,2]{Katalin M. Hangos}
\author[3]{Mihály Pituk}
\author[1,4]{Gábor Szederkényi} 

\affil[1]{\small Process Control Research Group, Systems and Control Laboratory, Institute for Computer Science and Control (MTA SZTAKI), Hungarian Academy of Sciences, \mbox{Kende u. 13-17, H-1111 Budapest, Hungary}} 
\affil[2]{\small Department of Electrical Engineering and Information Systems, University of Pannonia, \mbox{Egyetem u. 10, H-8200 Veszprém, Hungary}}
\affil[3]{\small Department of Mathematics, University of Pannonia, \mbox{Egyetem u. 10, H-8200 Veszprém, Hungary}}
\affil[4]{\small Faculty of Information Technology and Bionics, Pázmány Péter Catholic University, \mbox{Práter u. 50/a, H-1083 Budapest, Hungary}}

\date{}

%
%

\maketitle

\begin{abstract}
In this letter we introduce a class of delayed kinetic systems derived from mass action type reaction network models. We define the time delayed positive stoichiometric compatibility classes and the notion of complex balanced time delayed kinetic systems. We prove the uniqueness of equilibrium solutions within the time delayed positive stoichiometric compatibility classes for such models. In our main result we prove the semistability of the equilibrium solutions for complex balanced systems with arbitrary time delays using an appropriate Lyapunov-Krasovskii functional and LaSalle's invariance principle. As a consequence, we obtain that every positive complex balanced equilibrium solution is locally asymptotically stable relative to its positive stoichiometric compatibility class.
\end{abstract}

\noindent \textbf{Keywords:}
Nonnegative systems; Kinetic systems; Chemical reaction networks; Stability Theory; Time delay; Logarithmic Lyapunov–Krasovskii functionals

\section{Introduction}
The class of kinetic systems has proven to be a useful representation of nonnegative system models not only in biochemistry, but also in other areas like population or disease dynamics, process systems, and even transportation networks \cite{Erdi1989,Haddad2010,Walter1999}. A network-based description is often advantageous to describe key properties of potentially large, complex systems with many components \cite{Volpert1972,Osipenko2007}. Kinetic systems are naturally equipped with a network (i.e., directed graph) structure called the reaction graph, which is the abstraction of a set of chemical reactions, where the chemical complexes and reactions can be represented by vertices and directed edges, respectively. One of the primary aims of chemical reaction network theory (CRNT) is to discover relations between the dynamical behaviour and the graph structure of kinetic systems \cite{Horn1972,Feinberg1979,Angeli2009}. Probably the most widely known results of general importance in this field are the Deficiency Zero and Deficiency One Theorems \cite{Feinberg1987} and more recently the notion of absolute concentration robustness \cite{Shinar2010}.

The detailed balance property of a thermodynamic system, defined originally by Boltzmann in the 19th century, means that at equilibrium, each elementary reaction step is equilibriated by the corresponding reverse reaction. A more general condition is complex balance, which requires that the signed sum of incoming and outgoing reaction rates at equilibrium is zero for each complex in a chemical reaction network \cite{Horn1972a,Feinberg1972,Dickenstein2010}. It is worth remarking that complex balance does not depend on a particular equilibrium (if there exist multiple equilibria in a system), but it is a property of a chemical reaction network itself \cite{Horn1972}. For a historical review of the notions of detailed and complex balance, see \cite{Gorban2013}. 
Generally, complex balance is related both to the structure and to the parameters of chemical reaction networks. Firstly, complex balance implies that each component of the reaction graph is strongly connected (i.e., the reaction network is weakly reversible) \cite{Horn1972}. It is also important that deficiency zero weakly reversible reaction networks are complex balanced for any positive values of the reaction rate coefficients \cite{Feinberg1979,Feinberg1987}. However, complex balance becomes a parameter-dependent property when the deficiency of the network is higher than zero. The main significance of complex balance in systems and control theory stands in its stability implications \cite{Son:2001}. According to the Global Attractor Conjecture, complex balanced kinetic systems are globally stable in the positive orthant with a logarithmic Lyapunov function that does not depend on the model parameters. The conjecture was proved for several special cases such as one linkage class networks \cite{Anderson2011}, and a possible proof for the general problem has recently appeared in \cite{Craciun2015}. Using the non-uniqueness of reaction graphs corresponding to kinetic models \cite{Liptak2015}, a feedback design method was proposed in \cite{Liptak2016} that transforms a polynomial model into a complex balanced closed loop system via nonlinear state feedback.

Time-delays are often present in natural and technological processes, and the detailed mathematical treatment of such delays is sometimes necessary to model and understand important observed dynamical phenomena  \cite{Stepan1989,Fridman2014}. An excellent summary of the fundamental results on nonnegative and compartmental systems with time-delay can be found in Chapter 3 of \cite{Haddad2010}, where simple algebraic necessary and sufficient conditions are given for the asymptotic stability of delayed linear nonnegative models including linear compartmental systems. Among other results, the semistability of an important special class of nonlinear compartmental systems for arbitrary time-delays was shown in \cite{Chellaboina2008}.

Motivated by the above results, the purpose of this paper is to introduce the complex balance condition for kinetic systems with delayed reactions, and to study the stability properties of such systems using logarithmic Lyapunov-Krasovskii functionals and LaSalle's invariance principle.

Throughout the paper, we will use the following notations. If $N$ is a positive integer, ${\mathbb{R}}^N$ denotes the $N$-dimensional space of real column vectors. The symbols $\mathbb{R}^N_{+}$ and $\overline{\mathbb{R}}^N_{+}$ denote the set of  (element-wise) positive and nonnegative vectors in~$\mathbb{R}^N$, respectively. For $x,y \in \mathbb{R}^N_{+}$, the vector $\frac{x}{y}\in{\mathbb{R}}^N_{+}$ is defined by ${(\frac{x}{y})}_i=\frac{x_i}{y_i}$ for $i=1,\dots,N$. For $x, y\in\overline{\mathbb{R}}^N_{+}$,  the vector exponential $x^y$ is defined as $x^y = \prod_{i=1}^N x_i^{y_i}$. 
The mapping $\Ln:~ \mathbb{R}^N_{+} \rightarrow \mathbb{R}^N$ is the element-wise logarithmic mapping defined by ${(\Ln(x)})_i = \ln(x_i)$ for $x\in\mathbb{R}^N_{+}$ and $i=1,\dots,N$. Recall that $\ln(x^y) = y^T\Ln(x)$ for $x\in\mathbb{R}^N_{+}$ and $y\in\mathbb{R}^N$, where~$(\cdot)^T$ denotes the transpose, and $\Ln(\frac{x}{y}) = \Ln(x) - \Ln(y)$ whenever $x,y\in\mathbb{R}^N_{+}$.
For every $\tau\geq0$, the symbol $\mathcal{C} = C([-\tau, 0], \mathbb{R}^N)$ denotes the Banach space of continuous functions mapping the interval $\left[-\tau, 0\right]$ into $\mathbb{R}^N$ with the norm $\|\psi\|= \sup_{-\tau\leq s \leq 0}|\psi(s)|$ for $\psi\in\mathcal{C}$, where $|\cdot|$ denotes the Eucledian norm in $\mathbb{R}^N$. Finally, let
$\mathcal{C}_{+} = C([-\tau, 0], \mathbb{R}^N_{+})$ and $\overline{\mathcal{C}}_{+}=C([-\tau, 0], \overline{\mathbb{R}}^N_{+})$ denote the set of  positive and nonnegative functions in~$\mathcal{C}$.

\section{Kinetic systems with time delays}
In this section, we introduce mass-action kinetic systems with time delays and show that they generate a nonnegative semiflow. 

Consider the ordinary mass-action kinetic system \cite{Feinberg1979}
\begin{equation}
\label{eq:mak_ode}
 \dot{x}(t)= \sum_{k=1}^{M} \kappa_{k}\, (x(t))^{y_k} \left[y_k' - y_k\right], \qquad t \geq 0,
\end{equation}
where $x(t) \in\overline{\mathbb{R}}^{N}_{+}$ is the state vector.  We have a set of complexes $\mathcal{K} \subset\overline{\mathbb{Z}}^{N}_{+}$ and there are $M$ reactions between the complexes. As usual,  $\overline{\mathbb{Z}}^{N}_{+}$ denotes the set on nonnegative integers. Each reaction has a source and product complex $y_k, y_k'\in \mathcal{K}$, respectively, with a reaction rate constant $\kappa_k > 0$, $k=1,\dots,M$. Solutions of
\eqref{eq:mak_ode} are determined by nonnegative initial vectors $x(0)=\eta\in\overline{\mathbb{R}}^{N}_{+}$.
In this paper, we will consider the mass-action kinetic system with time delays
\begin{equation}
\label{eq:delay_crn}
 \dot{x}(t) =\sum_{k=1}^{M} \kappa_{k}\, \left[(x(t-\tau_k))^{y_k}\,y_k' - (x(t))^{y_k}\,y_k\right], \qquad t \geq 0,
\end{equation}
where $\tau_k\geq0$, $k=1,\dots,M$. In the special case $\tau_k=0$, $k=1,\dots,M$, Eq.~\eqref{eq:delay_crn} reduces to the ordinary mass kinetic system~\eqref{eq:mak_ode}. Solutions of~\eqref{eq:delay_crn} are generated by initial data
$x(t)=\theta(t)$ for $-\tau\leq t\leq0$, where $\tau=\max_{1\leq k\leq M}\tau_k$ is the maximum delay and $\theta\in\overline{\mathcal{C}}_+$ is a nonnegative continuous initial function. Throughout the paper, the solution of \eqref{eq:delay_crn} with initial function~$\theta\in\overline{\mathcal{C}}_+$ will be denoted by $x=x^\theta$. Note that the solutions of delay differential equations are usually interpreted in~$\mathcal C$. For every $t\geq0$, $x_t\in\mathcal C$ is defined by $x_t(s)=x(t+s)$ for $-\tau\leq s\leq0$.

In the following theorem, we show that the semiflow generated by the time delay kinetic system \eqref{eq:delay_crn} is nonnegative.

\begin{thm}\label{thm:nonnegative}
For every initial function~$\theta\in\overline{\mathcal{C}}_+$, the solution $x^\theta$ of \eqref{eq:delay_crn} is nonnegative, i.e., $x^\theta_t\in\overline{\mathcal{C}}_{+}$ for all $t\geq0$.
\end{thm}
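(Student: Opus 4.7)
My plan is to cast \eqref{eq:delay_crn} in the standard functional differential equation form $\dot{x}(t) = f(x_t)$, where $f\colon\overline{\mathcal{C}}_{+}\to\mathbb{R}^N$ is the (locally Lipschitz) map
$$
  f(\phi) = \sum_{k=1}^{M}\kappa_k\bigl[(\phi(-\tau_k))^{y_k}\,y_k' - (\phi(0))^{y_k}\,y_k\bigr].
$$
Standard existence theory then gives, for every $\theta\in\overline{\mathcal{C}}_{+}$, a unique solution $x^\theta$ on some maximal interval $[0,T_{\max})$. Nonnegativity on this interval will follow from the classical quasi--positivity (subtangential) criterion: the cone $\overline{\mathcal{C}}_{+}$ is positively invariant provided that $f_i(\phi)\ge 0$ for every $\phi\in\overline{\mathcal{C}}_{+}$ and every coordinate $i$ with $\phi_i(0)=0$.

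Verifying this condition is the crux, and is where the mass--action structure pays off. The only possibly negative contribution to $f_i(\phi)$ is the outgoing term $-\kappa_k(\phi(0))^{y_k}(y_k)_i$. If $(y_k)_i=0$, it is trivially zero; if $(y_k)_i\ge 1$, then the monomial $(\phi(0))^{y_k}$ contains the factor $\phi_i(0)^{(y_k)_i}=0$, so the term again vanishes. Hence whenever $\phi_i(0)=0$,
$$
  f_i(\phi) = \sum_{k=1}^{M}\kappa_k\,(\phi(-\tau_k))^{y_k}\,(y_k')_i \;\ge\; 0,
$$
the delayed factors being nonnegative by the assumption on $\phi$. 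Invoking the invariance theorem for delay differential equations then yields $x^\theta_t\in\overline{\mathcal{C}}_{+}$ throughout $[0,T_{\max})$.

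To upgrade this to $T_{\max}=\infty$, I would use the just-established nonnegativity to bound the outgoing terms from above by zero, giving the pointwise inequality $\dot{x}_i(t)\le\sum_{k}\kappa_k(x(t-\tau_k))^{y_k}(y_k')_i$. Iterating along subintervals of length at most $\tau$, on each of which the delayed arguments are controlled by the preceding interval (starting from the bounded initial datum $\theta$), and absorbing any $\tau_k=0$ contributions via a local Gronwall estimate, rules out finite--time blow--up and produces a global nonnegative solution. The main subtlety throughout is conceptual rather than computational: one must recognise that the correct boundary hypothesis in the quasi--positivity criterion concerns the present--time value $\phi(0)$, not values along the history, because only the current coordinate governs the tangential condition at the boundary of $\overline{\mathcal{C}}_{+}$. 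Once this is in place, the reaction--network structure---outgoing fluxes proportional to the source monomial $(x(t))^{y_k}$ that vanishes whenever any participating species is absent---delivers the required inequality with essentially no calculation.
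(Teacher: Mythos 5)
Your core argument is exactly the paper's: write the system as $\dot{x}(t)=F(x_t)$, check the quasi-positivity condition $F_i(\phi)\geq 0$ whenever $\phi\in\overline{\mathcal{C}}_{+}$ and $\phi_i(0)=0$ (the outgoing monomial $(\phi(0))^{y_k}(y_k)_i$ vanishes either because $(y_k)_i=0$ or because the factor $\phi_i(0)^{(y_k)_i}=0$), and invoke the invariance theorem for delay equations (Theorem~2.1 in Chap.~5 of Smith's monograph, which is precisely what the paper cites). So the nonnegativity part is correct and essentially identical.

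The one place you go beyond the paper is the attempt to force $T_{\max}=\infty$, and that sketch does not work as stated. For reactions with $\tau_k=0$ the bound $\dot{x}_i(t)\leq\sum_k\kappa_k(x(t-\tau_k))^{y_k}(y_k')_i$ still contains the undelayed, generally superlinear monomial $(x(t))^{y_k}$, and Gronwall's lemma cannot absorb it; indeed the single undelayed reaction $2X\rightarrow 3X$ gives $\dot{x}=\kappa x^2$, which blows up in finite time, so global existence is simply false for this class of systems without further hypotheses. The paper quietly avoids this issue (nonnegativity is really a statement on the maximal interval of existence), so you should either drop the global-existence claim or restrict it to the genuinely delayed case where the stepwise (method-of-steps) bound you describe does apply.
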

\begin{proof}
Eq.~\eqref{eq:delay_crn} can be written in the form
\begin{equation*}
\dot{x}(t)=F(x_t),
\end{equation*}
where $F:\overline{\mathcal{C}}_+\rightarrow\mathbb{R}^N$ is given by
\begin{equation*}
F(\phi)=
\sum_{k=1}^{M} \kappa_{k}\, \left[(\phi(-\tau_k))^{y_k}\,y_k' - (\phi(0))^{y_k}\,y_k\right],\qquad\phi\in\overline{\mathcal{C}}_{+}.
\end{equation*}
It follows from the definition of the vector exponential that if $\phi\in\overline{\mathcal{C}}_{+}$ and $\phi_i(0)=0$ for some~$i\in\{1,\dots,N\}$, then
\begin{equation*}
F_i(\phi)=\sum_{k=1}^{M} \kappa_{k}(\phi(-\tau_k))^{y_k}\,{(y_k')}_i\geq0.
\end{equation*}
Here $\phi_i$ and $F_i$ denote the $i$-th coordinate function of $\phi$ and~$F$, respectively. The conclusion follows from Theorem~2.1 in Chap.~5 of \cite{Smith1995}. Alternatively, we can use the generalization of Proposition~3.1 of \cite{Haddad2010} to equations with multiple delays.
\end{proof}

\section{Stoichiometric compatibility classes for delayed kinetic systems}

Recall \cite{Feinberg1979} that the stoichiometric subspace $\mathcal{S}$ for the ordinary mass-action kinetic system \eqref{eq:mak_ode} is defined by
\begin{equation}
\mathcal{S} = \text{span}\left\{y_k' - y_k ~\mid~ k = 1,\dots,M\right\},
\end{equation}
and for each $p \in\overline{\mathbb{R}}^{N}_{+}$ the corresponding positive stoichiometric compatibility class $\mathcal{S}_p$ is given by
\begin{equation}
\mathcal{S}_p = \left\{x \in\overline{\mathbb{R}}^{N}_{+} ~\mid~ x-p \in \mathcal{S} \right\}.
\end{equation}
 It is well know that the positive stoichiometric classes $\mathcal{S}_p$ are positively invariant under the mass-action kinetic system \eqref{eq:mak_ode}, i.e.  $x(0) \in \mathcal{S}_p$ implies $x(t)\in\mathcal{S}_p$ for all $t \geq 0$.

In this section we will extend the definition of the positive stoichiometric classes to the time delayed kinetic system~\eqref{eq:delay_crn} and we prove their invariance property.

For each $v\in{\mathbb{R}}^N$, define the functional
 $c_v:\overline{\mathcal{C}}_{+}\rightarrow\mathbb{R}$ by
\begin{equation}
\label{eq:funct_c}
c_v(\psi) = v^T \left[ \psi(0) + \sum_{k = 1}^{M} \left( \kappa_k \int_{-\tau_{k}}^0 (\psi(s))^{y_k}\, ds \right) y_k \right],\qquad\psi\in\overline{\mathcal{C}}_{+}.
\end{equation}
Let $\mathcal{S}^\perp$ denote the orthogonal complement of the stoichiometric subspace~$\mathcal{S}$ given by $\mathcal{S}^\perp=\{\,v\in\mathbb{R}^N\mid v^T y=0\text{ for all
 $y\in\mathcal{S}$}\,\}$. 
  Now we can formulate the definition of the positive stoichiometric compatibility classes for the delayed kinetic system~\eqref{eq:delay_crn}.
  For each $\theta\in\overline{\mathcal{C}}_{+}$, the {\it positive stoichiometric compatibility class of \eqref{eq:delay_crn} corresponding to~$\theta$} is denoted by $\mathcal{D}_\theta$ and is defined by
 \begin{equation}
\label{eq:delayed_stoch_comp_class}
\mathcal{D}_\theta =\{\psi\in\overline{\mathcal{C}}_{+} \mid c_v(\psi) = c_v(\theta)
\quad\text{for all $v \in \mathcal{S}^{\perp}$}\}.
\end{equation}
It is easily seen that  $\psi\in\mathcal{D}_\theta$ if and only if $\psi\in\overline{\mathcal{C}}_{+}$ and
\begin{equation}\label{eq:charact}
\psi(0) -\theta(0) + \sum_{k = 1}^{M} \left( \kappa_k \int_{-\tau_{k}}^0[ (\psi(s))^{y_k} - (\theta(s))^{y_k}]\, ds \right) y_k  \in \mathcal{S}.
\end{equation}
Therefore if we ignore the delays in \eqref{eq:delay_crn}, i.e. $\tau_k=0$ for $k=1,\dots,M$, then the above delayed positive stoichiometric compatibility classes coincide with the positive stoichiometric compatibility classes of the ordinary kinetic system~ \eqref{eq:mak_ode}.

In the next theorem, we establish the invariance property of the above delayed positive stoichiometric compatibility classes.

\begin{thm}\label{thm:invariance}
For every $\theta\in\overline{\mathcal{C}}_{+}$, the positive stoichiometric compatibility class~$\mathcal{D}_\theta$ is a closed subset of~$\overline{\mathcal{C}}_{+}$. Moreover,  $\mathcal{D}_\theta$ is positively invariant under Eq.~\eqref{eq:delay_crn}, i.e. if $\psi\in\mathcal{D}_\theta$, then $x^\psi_t\in\mathcal{D}_\theta$ for all $t\geq0$.
\end{thm}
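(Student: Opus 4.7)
The plan has two parts, matching the two assertions of the theorem.

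First, for closedness, the strategy is to show that for each fixed $v\in\mathcal{S}^\perp$ the functional $c_v:\overline{\mathcal{C}}_{+}\to\mathbb{R}$ defined in \eqref{eq:funct_c} is continuous, so that $\{\psi\in\overline{\mathcal{C}}_{+}\mid c_v(\psi)=c_v(\theta)\}$ is closed in $\overline{\mathcal{C}}_{+}$, and then note that $\mathcal{D}_\theta$ is the intersection of these closed sets over $v\in\mathcal{S}^\perp$. Continuity of $c_v$ is immediate: evaluation at $0$ is continuous in the sup norm, the map $\psi\mapsto(\psi(\cdot))^{y_k}$ is continuous on $\overline{\mathcal{C}}_{+}$ by continuity of the vector exponential, and integration over a bounded interval preserves continuity. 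This step is routine.

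For invariance, the core idea is to show that the conserved functionals $c_v$ (with $v\in\mathcal{S}^\perp$) are indeed conserved along solutions. Fix $\psi\in\mathcal{D}_\theta$ and let $x=x^\psi$; by Theorem~\ref{thm:nonnegative}, $x_t\in\overline{\mathcal{C}}_{+}$ for all $t\geq0$, so $c_v(x_t)$ is well defined. Rewriting the functional via the substitution $u=t+s$ gives
\begin{equation*}
c_v(x_t)=v^T\!\left[x(t)+\sum_{k=1}^{M}\kappa_k\!\left(\int_{t-\tau_k}^{t}(x(u))^{y_k}\,du\right)y_k\right].
\end{equation*}
Differentiating in $t$, using \eqref{eq:delay_crn} for $\dot x(t)$ and the fundamental theorem of calculus for each integral term, I expect massive cancellation: the $(x(t))^{y_k}y_k$ contributions cancel between the two pieces, and what remains is
\begin{equation*}
\frac{d}{dt}c_v(x_t)=v^T\sum_{k=1}^{M}\kappa_k(x(t-\tau_k))^{y_k}(y_k'-y_k).
\end{equation*}
Since $y_k'-y_k\in\mathcal{S}$ and $v\in\mathcal{S}^\perp$, every term on the right vanishes, so $c_v(x_t)\equiv c_v(x_0)=c_v(\psi)=c_v(\theta)$ for all $t\geq0$ and all $v\in\mathcal{S}^\perp$. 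Combined with the nonnegativity from Theorem~\ref{thm:nonnegative}, this yields $x_t\in\mathcal{D}_\theta$.

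The main obstacle is essentially bookkeeping rather than a genuine difficulty: one must be careful to justify differentiating through the integral (straightforward by continuity of $u\mapsto(x(u))^{y_k}$) and to track which terms cancel. The definition of $c_v$ was evidently engineered precisely so that the integral correction terms absorb the ``instantaneous'' consumption rates $(x(t))^{y_k}y_k$ in the kinetic equation and leave only a combination of vectors lying in $\mathcal{S}$. Once this cancellation is set up cleanly, the orthogonality $v\in\mathcal{S}^\perp$ finishes the argument.
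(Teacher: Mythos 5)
Your proposal is correct and follows essentially the same route as the paper: closedness from continuity of the functionals $c_v$, and invariance by computing $\frac{d}{dt}c_v(x_t)=v^T\sum_{k=1}^{M}\kappa_k(x(t-\tau_k))^{y_k}(y_k'-y_k)=0$ for $v\in\mathcal{S}^\perp$. The only difference is that you spell out the change of variables and the cancellation that the paper leaves implicit.
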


\begin{proof}
Let $\theta\in\overline{\mathcal{C}}_{+}$. The closedness of~$\mathcal{D}_\theta$ is a simple 
consequence of the continuity of functionals~$c_v$, $v\in\mathcal{S}^\perp$.
We will show that for every $v\in\mathcal{S}^\perp$ the functional $c_v$ defined by \eqref{eq:funct_c} is constant along the solutions of Eq.~\eqref{eq:delay_crn}. Indeed, if $x$ is a solution of~\eqref{eq:delay_crn}, then we have for $t\geq0$,
\begin{equation*}
\begin{split}
\frac{d}{dt}({c}_v(x_t)) &= v^T \sum_{k=1}^{M} \kappa_{k}\, (x(t-\tau_k))^{y_k}\left(y_k' - y_k\right) \\
& = \sum_{k=1}^{M} \kappa_{k}\, (x(t-\tau_k))^{y_k}v^T(y_k' - y_k)=0,
\end{split}
\end{equation*}
the last equality being a consequence of the definition of~$\mathcal{S}^\perp$. From this, we find that if $\psi\in\mathcal{D}_\theta$, then for every $v\in\mathcal{S}^\perp$ and $t\geq0$,
$$
c_v(x^\psi_t)=c_v(x^\psi_0)=c_v(\psi)=c_v(\theta)
$$
and hence $x^\psi_t\in\mathcal{D}_\theta$. This show that $\mathcal{D}_\theta$
invariant under Eq.~\eqref{eq:delay_crn}. 
\end{proof}

\section{Semistability for delayed complex balanced kinetic systems}
Before we formulate our main stability criterion, we recall some definitions.

By a {\it positive equilibrium} of~\eqref{eq:mak_ode} or~\eqref{eq:delay_crn}, we mean a positive vector $\overline{x}\in\mathbb{R}^N_{+}$ such that $x(t)\equiv\overline{x}$ is a solution of~\eqref{eq:mak_ode} and~\eqref{eq:delay_crn}, respectively. Note that 
Eqs.~\eqref{eq:mak_ode} and~\eqref{eq:delay_crn} share the same equilibria satisfying the algebraic equation
\begin{equation}
\sum_{k=1}^{M} \kappa_{k}(\overline{x})^{y_k} \left[y_k' - y_k\right]=0.
\end{equation}
A positive equilibrium~$\overline{x}$ is called {\it complex balanced} if for every 
$\eta \in \mathcal{K}$,
\begin{equation}
\label{eq:complex_balanced}
\sum_{k:\eta = y_k}\kappa_k(\overline{x})^{y_k}= \sum_{k:\eta = y_k'} \kappa_k (\overline{x})^{y_k},
\end{equation}
where the sum on the left is over all reactions for which~$\eta$ is the source complex and the
sum on the right is over all reactions for which~$\eta$ is the product complex.
Finally, an ordinary or delayed kinetic system is called {\it complex balanced} if it has a positive complexed balanced equilibrium.

It is well-known \cite{vanderSchaft2015} that if Eq.~\eqref{eq:mak_ode} and hence~\eqref{eq:delay_crn}
 has a positive complex balanced equilibrium $\overline{x}$, then any other positive equilibrium is complex balanced and the set of all positive equilibria~$\mathcal{E}$ can be characterized by 
\begin{equation}\label{eq:equil}
\mathcal{E} = \{\tilde{x} \in\mathbb{R}^N_{+}\mid \Ln(\tilde{x}) - \Ln(\overline{x}) \in \mathcal{S}^\perp\}.
\end{equation}
Now we formulate the main result of the paper about the semistability of positive equilibria of delayed complex balanced systems in the sense of the following definition. A positive equilibrium $\overline{x}$ of Eq.~\eqref{eq:delay_crn} is called {\it semistable} if it is Lyapunov stable and there exists $\delta>0$ such that if $\theta\in\hat{\mathcal{B}}_\delta(\overline{x})$,
 then $x^\theta(t)$ converges to a Lyapunov stable equilibrium of~\eqref{eq:delay_crn} as $t\to\infty$. As usual, $\hat{\mathcal{B}}_\delta(\overline{x})=\{\,\psi\in\mathcal{C}\mid\|\psi-\overline{x}\|\leq\delta\,\}.
$
\begin{thm}\label{thm:semistability}
Every positive complex balanced equilibrium of the delayed kinetic system~\eqref{eq:delay_crn} is semistable.
\end{thm}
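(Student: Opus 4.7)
The plan is to build a logarithmic Lyapunov--Krasovskii functional tailored to the complex balance property, show it is non-increasing along trajectories, and then apply LaSalle's invariance principle for retarded functional differential equations to pin down the $\omega$-limit set. Fix a positive complex balanced equilibrium $\overline{x}$ and define $V:\overline{\mathcal{C}}_{+}\to\mathbb{R}$ by
\begin{equation*}
V(\psi)=\sum_{i=1}^{N}\Bigl[\psi_i(0)\ln\!\bigl(\psi_i(0)/\overline{x}_i\bigr)-\psi_i(0)+\overline{x}_i\Bigr]+\sum_{k=1}^{M}\kappa_k\!\int_{-\tau_k}^{0}\Bigl[(\psi(s))^{y_k}\ln\!\bigl((\psi(s))^{y_k}/\overline{x}^{y_k}\bigr)-(\psi(s))^{y_k}+\overline{x}^{y_k}\Bigr]\,ds.
\end{equation*}
Each summand has the form $\overline{u}\,g(u/\overline{u})$ with $g(r)=r\ln r-r+1\ge 0$, so $V$ is continuous, nonnegative on $\overline{\mathcal{C}}_{+}$, and vanishes on the constant function $\overline{x}$. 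The first block is the classical Horn--Jackson pseudo-Helmholtz entropy; the Krasovskii integral absorbs the mismatch between $(x(t))^{y_k}$ and $(x(t-\tau_k))^{y_k}$ in \eqref{eq:delay_crn}.

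Differentiating $V$ along a solution, the classical part yields $\sum_{k}\kappa_{k}\bigl[(x(t-\tau_k))^{y_k}\ln((x(t))^{y_k'}/\overline{x}^{y_k'})-(x(t))^{y_k}\ln((x(t))^{y_k}/\overline{x}^{y_k})\bigr]$, and the Krasovskii part contributes the boundary difference of its integrand at $s=0$ and $s=-\tau_k$. With the shorthand $\mu(t)=x(t)/\overline{x}$, $b_k(t)=(x(t-\tau_k))^{y_k}$, and $q_k(t)=\overline{x}^{y_k}\mu(t)^{y_k'}$, the two pieces combine and regroup as
\begin{equation*}
\dot V(x_t)=-\sum_{k=1}^{M}\kappa_k\,q_k(t)\,g\!\left(\frac{b_k(t)}{q_k(t)}\right)+\sum_{k=1}^{M}\kappa_k\,\overline{x}^{y_k}\bigl[\mu(t)^{y_k'}-\mu(t)^{y_k}\bigr].
\end{equation*}
The second sum, after re-indexing by complex $\eta\in\mathcal{K}$, vanishes identically by the complex balance identity \eqref{eq:complex_balanced}, exactly as in the classical Horn--Jackson computation. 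The first sum is nonpositive since $g\ge 0$. Hence $\dot V\le 0$ globally, with equality iff $b_k(t)=q_k(t)$ for every $k$, that is, $(x(t-\tau_k))^{y_k}=\overline{x}^{y_k-y_k'}(x(t))^{y_k'}$.

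The standard Lyapunov--Krasovskii theorem now yields Lyapunov stability of $\overline{x}$, while the invariance of $\mathcal{D}_\theta$ from Theorem~\ref{thm:invariance} together with the fact that level sets of $V$ intersected with $\mathcal{D}_\theta$ are bounded provides precompactness of forward orbits starting near $\overline{x}$. LaSalle's invariance principle for functional differential equations then gives $x^\theta_t\to M$, where $M$ is the largest invariant subset of $\{\psi\in\overline{\mathcal{C}}_{+}:\dot V(\psi)=0\}$. \textbf{The key remaining step} is to identify $M$ with the constant equilibria in $\mathcal{E}$. Along any solution living in $M$ the extremality identity $(x(t-\tau_k))^{y_k}=\overline{x}^{y_k-y_k'}(x(t))^{y_k'}$ holds for all $t$; substituting into \eqref{eq:delay_crn} gives $\dot x(t)=\sum_{k}\kappa_k\,\overline{x}^{y_k}\bigl[\mu(t)^{y_k'}y_k'-\mu(t)^{y_k}y_k\bigr]$, and re-indexing once more by complex $\eta$ shows that this vector vanishes by complex balance. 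Thus $\dot x\equiv 0$ on $M$, so the solution is constant, and the extremality identity with $x\equiv\tilde x$ reduces to $\Ln\tilde x-\Ln\overline{x}\in\mathcal{S}^{\perp}$, whence $\tilde x\in\mathcal{E}$ via \eqref{eq:equil}. Combining LaSalle with the uniqueness of equilibria within each $\mathcal{D}_\theta$ (established earlier in the paper) and connectedness of the $\omega$-limit set yields convergence to a single $\tilde x\in\mathcal{E}\cap\mathcal{D}_\theta$. Since $\tilde x$ is itself positive complex balanced, repeating the construction of $V$ centered at $\tilde x$ shows that $\tilde x$ is Lyapunov stable, so every trajectory starting sufficiently close to $\overline{x}$ converges to a Lyapunov stable equilibrium, establishing semistability.
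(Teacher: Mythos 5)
Your proposal follows essentially the same route as the paper: the identical Lyapunov--Krasovskii functional (your $\overline{u}\,g(u/\overline{u})$ form with $g(r)=r\ln r-r+1$ is just a rewriting of \eqref{eq:lyap_kras_func}), the same derivative computation (your exact decomposition $\dot V=-\sum_k\kappa_k q_k\,g(b_k/q_k)$ after the complex-balance cancellation is equivalent to the paper's estimate via inequality \eqref{eq:exp_ineq}), the same identification of the largest invariant set with constant equilibria, and the same combination of LaSalle, invariance and closedness of $\mathcal{D}_\theta$, and Theorem~\ref{thm:unique} to single out the limit. One step is under-justified: you invoke ``the standard Lyapunov--Krasovskii theorem'' to get Lyapunov stability of $\overline{x}$ from $V\geq 0$, $V(\overline{x})=0$ and $\dot V\leq 0$, but that theorem requires a class-$\mathcal{K}$ lower bound $V(\psi)\geq\alpha(|\psi(0)-\overline{x}|)$, not mere nonnegativity (a functional that is nonnegative and vanishes at $\overline{x}$ could still be flat near $\overline{x}$). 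The paper devotes inequality \eqref{eq:log_ineq} and the Appendix precisely to producing $\alpha(r)=\gamma\ln(1+r^2)$; your observation that each summand is $\overline{x}_i\,g(\psi_i(0)/\overline{x}_i)$ with $g$ strictly positive away from $1$ and proper contains the needed ingredient, but you should state and prove the bound. Relatedly, your precompactness argument via boundedness of level sets of $V$ intersected with $\mathcal{D}_\theta$ does not by itself keep the orbit away from the boundary of the positive orthant (where $V$ stays finite); the paper avoids this by using the just-established Lyapunov stability to trap the orbit in $\hat{\mathcal{B}}_\epsilon(\overline{x})$ with $\epsilon<\min_i\overline{x}_i$, which is the cleaner way to run LaSalle here and is available to you once the stability step is repaired.
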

As a preparation for the proof of Theorem~\ref{thm:semistability}, we establish an auxiliary result about the uniqueness of  positive equilibria in the positive stoichiometric classes of complex balanced systems.

\begin{thm}\label{thm:unique}
Suppose that the delayed kinetic system~\eqref{eq:delay_crn} is complex balanced. 
Then for every $\theta\in\overline{\mathcal{C}}_{+}$ the corresponding delayed stoichiometric 
class~$\mathcal{D}_\theta$ contains at most one positive equilibrium.
\end{thm}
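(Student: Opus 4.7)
The plan is to assume $\overline{x}_1, \overline{x}_2 \in \mathcal{D}_\theta$ are two positive equilibria and derive $\overline{x}_1 = \overline{x}_2$. The key observation is that the complex balance hypothesis gives, via the characterization~\eqref{eq:equil}, a distinguished vector $w := \Ln(\overline{x}_1) - \Ln(\overline{x}_2)$ that automatically lies in $\mathcal{S}^\perp$; and that testing the $\mathcal{D}_\theta$-condition $c_v(\overline{x}_1)=c_v(\overline{x}_2)$ with precisely this $v=w$ turns every term of the resulting identity into an $(a-b)(\ln a-\ln b)$-type quantity.

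First, I would invoke~\eqref{eq:equil}: since the system is complex balanced, any pair of positive equilibria $\overline{x}_1,\overline{x}_2$ satisfies $w = \Ln(\overline{x}_1)-\Ln(\overline{x}_2) \in \mathcal{S}^\perp$. Next, I evaluate the defining relation of $\mathcal{D}_\theta$ on the two constant functions. Because $\int_{-\tau_k}^0(\overline{x})^{y_k}\,ds = \tau_k(\overline{x})^{y_k}$, the equality $c_v(\overline{x}_1)=c_v(\overline{x}_2)$ for every $v\in\mathcal{S}^\perp$ reads
$$
v^T(\overline{x}_1 - \overline{x}_2) + \sum_{k=1}^M \kappa_k \tau_k \bigl[(\overline{x}_1)^{y_k} - (\overline{x}_2)^{y_k}\bigr]\,v^T y_k = 0.
$$

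Specializing to $v = w$, the identities $w^T y_k = \ln((\overline{x}_1)^{y_k}) - \ln((\overline{x}_2)^{y_k})$ and $w^T(\overline{x}_1 - \overline{x}_2) = \sum_i [\ln \overline{x}_{1,i} - \ln \overline{x}_{2,i}](\overline{x}_{1,i} - \overline{x}_{2,i})$ reveal that every term on the left-hand side has the form $(a-b)(\ln a - \ln b)$ for positive reals $a,b$. The elementary inequality $(a-b)(\ln a-\ln b)\geq 0$, with equality iff $a=b$, then shows that each term is nonnegative. A sum of nonnegative numbers equaling zero forces each summand to vanish; in particular $w^T(\overline{x}_1 - \overline{x}_2)=0$, and the equality case of the inequality component by component yields $\overline{x}_{1,i}=\overline{x}_{2,i}$ for every $i$, hence $\overline{x}_1=\overline{x}_2$.

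The main obstacle is not a delicate estimate but rather spotting the correct test vector. It is precisely the choice $v = \Ln(\overline{x}_1) - \Ln(\overline{x}_2)$ that simultaneously realigns the affine constraint provided by $\mathcal{D}_\theta$ with the Boltzmann-type inequality $(a-b)(\ln a-\ln b)\geq 0$; any other $v \in \mathcal{S}^\perp$ leaves summands of uncontrolled sign. In this sense the proof is where the complex balance structure (controlling $\mathcal{S}^\perp$ through~\eqref{eq:equil}) and the delayed compatibility functional $c_v$ have to be used in tandem, and once they are, the argument collapses to a one-line nonnegativity check.
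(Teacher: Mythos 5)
Your proposal is correct and follows essentially the same route as the paper: identify $w=\Ln(\overline{x}_1)-\Ln(\overline{x}_2)\in\mathcal{S}^\perp$ via~\eqref{eq:equil}, pair it with the $\mathcal{D}_\theta$-constraint (the paper uses the equivalent form~\eqref{eq:charact}), and conclude from the sign and equality case of $(a-b)(\ln a-\ln b)\geq 0$. No gaps; the component-wise equality case in the first sum already forces $\overline{x}_1=\overline{x}_2$, exactly as in the paper.
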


\begin{proof}Let $\theta\in\overline{\mathcal{C}}_{+}$. Suppose that $\tilde{x}$ and $\overline{x}$
are positive equilibria belonging to~$\mathcal{D}_\theta$. From the
characterization~\eqref{eq:charact} of~~$\mathcal{D}_\theta$, we find that
\begin{equation*}
\tilde{x} -\overline{x} + \sum_{k = 1}^{M} \left( \kappa_k \int_{-\tau_{k}}^0[\tilde{x}^{y_k} -\overline{x}^{y_k}]\, ds \right) y_k \in \mathcal{S}.
\end{equation*}
This, together with~\eqref{eq:equil}, yields
\begin{equation*}
\begin{split}
0 &= (\Ln(\tilde{x}) - \Ln(\overline{x}))^T\left[\tilde{x} -\overline{x} + \sum_{k = 1}^{M} \left( \kappa_k \int_{-\tau_{k}}^0[\tilde{x}^{y_k} - \overline{x}^{y_k}]\, ds \right) y_k \right] \\
&= \sum_{i=1}^{N}\left(\ln(\tilde{x}_i) - \ln(\overline{x}_i)\right)(\tilde{x}_i -\overline{x}_i) 
+ \sum_{k = 1}^{M} \kappa_k\tau_{k} \left(\ln(\tilde{x}^{y_k}) - \ln(\overline{x}^{y_k})\right)\left(\tilde{x}^{y_k} - \overline{x}^{y_k}\right).
\end{split}
\end{equation*}
Since $(\ln(a)-\ln(b))(a-b) \geq 0$ whenever $a$, $b>0$
with equality  if and only if $a = b$, this is possible only if $\tilde{x}_i=\overline{x}_i$
for all $i=1,\dots,N$.
\end{proof}

Now we are in a position to give a proof of Theorem~\ref{thm:semistability}. It will be based on the Lyapunov-Krasovskii method and LaSalle's invariance principle \cite{Haddad2010}, \cite{Hale1993}, \cite{Smith2011}, \cite{Anderson2014}.

\begin{proof}[Proof of Theorem~\ref{thm:semistability}]We will use the following two inequalities. For every $a,b\in\mathbb R$,
\begin{equation}
\label{eq:exp_ineq}
e^a(b-a) \leq e^b-e^a,
\end{equation}
with equality if and only if $a=b$.
For every $b>0$ there exists $c>0$ such that for all $x>0$,
\begin{equation}
\label{eq:log_ineq}
x\bigl[\ln(x)-\ln(b)- 1\bigr] + b \geq c \ln\bigl[\,1 + (x-b)^2\,\bigr]\geq0.
\end{equation}
Inequality~\eqref{eq:exp_ineq} is not new. It is a simple consequence of the mean value theorem applied to the exponential function. Inequality~\eqref{eq:log_ineq} is less obvious. Its proof is given in the Appendix.

Consider the candidate Lyapunov–Krasovskii functional  $V:\mathcal{C}_+ \rightarrow \overline{\mathbb{R}}_+$ defined by
\begin{equation}
\label{eq:lyap_kras_func}
\begin{split}
V(\psi) = &\sum_{i = 1}^{N}\bigl( \psi_i(0) (\ln(\psi_i(0)) - \ln(\overline{x}_i) - 1) + \overline{x}_i \bigr) \\
&+\sum_{k = 1}^{M} \kappa_k \int_{-\tau_{k}}^0 \left\{(\psi(s))^{y_k} \left[\ln((\psi(s))^{y_k}) - \ln(\overline{x}^{y_k}) - 1\right] + \overline{x}^{y_k}\right\} ds
\end{split}
\end{equation}
for $\psi\in\mathcal{C}_+$. Clearly, $V(\overline{x})=0$. We will show that there exists
a continuous strictly increasing function $\alpha:[0,\infty)\rightarrow[0,\infty)$ with  
$\alpha(0)=0$ such that
\begin{equation}\label{eq:lowerbound}
V(\psi)\geq\alpha(|\psi(0)-\overline{x}|),\qquad\psi\in\mathcal{C}_{+},
\end{equation}
where $|\cdot|$ is the Euclidean norm in~$\mathbb{R}^N$. 
By virtue of~\eqref{eq:log_ineq}, the second sum in~\eqref{eq:lyap_kras_func} is nonnegative and the first sum in~\eqref{eq:lyap_kras_func} can be estimated from below using the first inequality in~\eqref{eq:log_ineq}. Thus, \eqref{eq:log_ineq} implies the existence of
positive constants ~$c_i$, $1\leq i\leq N$, such that for all $\psi\in\mathcal{C}_+$,
\begin{equation*}
\begin{split}
V(\psi) \geq &\sum_{i = 1}^{N}c_i\ln\left[1 + (\psi_i(0) - \overline{x}_i)^2\right]
\geq\gamma\sum_{i = 1}^{N}\ln\left[1 + (\psi_i(0) - \overline{x}_i)^2\right]\\
&= \gamma \ln\prod_{i = 1}^{N}\left[1 + (\psi_i(0) - \overline{x}_i)^2\right]\geq \gamma\ln\biggl(1+\sum_{i=1}^N(\psi_i(0) - \overline{x}_i)^2\biggr)\\
& = \gamma \ln\left(1 + |\psi(0) - \overline{x}|^2\right),
\end{split}
\end{equation*}
where $\gamma=\min_{1\leq i\leq N}c_i$. Thus,   \eqref{eq:lowerbound} holds with
$$
\alpha(r)=\gamma\ln(1+r^2),\qquad r\geq0.
$$
Next, it follows that the Lyapunov-Krasovskii directional derivative along trajectories
 of~\eqref{eq:delay_crn} is given by
\begin{equation*}
\begin{split}
\dot{V}(x_t) &= \sum_{k = 1}^{M} \kappa_{k}\, \Ln\left(\frac{x(t)}{\overline{x}}\right)^T\left[(x(t-\tau_k))^{y_k}\,y_k' - (x(t))^{y_k}\,y_k\right] \\
&+\sum_{k = 1}^{M} \kappa_{k}\,(x(t))^{y_k}\, \left(\ln\left(\left\{\frac{x(t)}{\overline{x}}\right\}^{y_k}\right) - 1\right) \\
&-\sum_{k = 1}^{M} \kappa_{k} \, (x(t-\tau_k))^{y_k}\, \left(\ln\left(\left\{\frac{x(t-\tau_k)}{\overline{x}}\right\}^{y_k}\right) - 1\right)\\
&=  \sum_{k = 1}^{M} \kappa_{k}\,\left[(x(t-\tau_k))^{y_k}\,\ln\left(\left\{\frac{x(t)}{\overline{x}}\right\}^{y_k'}\right) - (x(t))^{y_k}\,\ln\left(\left\{\frac{x(t)}{\overline{x}}\right\}^{y_k}\right)\right] \\
&+\sum_{k = 1}^{M} \kappa_{k}\,\left[ (x(t))^{y_k}\, \ln\left(\left\{\frac{x(t)}{\overline{x}}\right\}^{y_k}\right)
- \, (x(t-\tau_k))^{y_k}\, \ln\left(\left\{\frac{x(t-\tau_k)}{\overline{x}}\right\}^{y_k}\right)\right] \\
&+ \sum_{k = 1}^{M} \kappa_{k}\left[(x(t-\tau_k))^{y_k} - (x(t))^{y_k}\right] \\
&=\sum_{k = 1}^{M} \kappa_{k}\,\overline{x}^{y_k} \left(\frac{x(t-\tau_k)}{\overline{x}}\right)^{y_k}\,\left[
\ln\left(\left\{\frac{x(t)}{\overline{x}}\right\}^{y_k'}\right) -  \ln\left(\left\{\frac{x(t-\tau_k)}{\overline{x}}\right\}^{y_k}\right)\right]  \\
&+ \sum_{k = 1}^{M} \kappa_{k}\,\overline{x}^{y_k}\,\left[\left(\frac{x(t-\tau_k)}{\overline{x}}\right)^{y_k} - \left(\frac{x(t)}{\overline{x}}\right)^{y_k}\right].
\end{split}
\end{equation*}
By virtue of~\eqref{eq:exp_ineq}, we have for each $k=1,\dots ,M$,
\begin{equation*}
\begin{split}
&\left(\frac{x(t-\tau_k)}{\overline{x}}\right)^{y_k}\left[
\ln\left(\left\{\frac{x(t)}{\overline{x}}\right\}^{y_k'}\right)-\ln\left(\left\{\frac{x(t-\tau_k)}{\overline{x}}\right\}^{y_k}\right)\right]\\
&\leq
\left(\frac{x(t)}{\overline{x}}\right)^{y_k'} -  \left(\frac{x(t-\tau_k)}{\overline{x}}\right)^{y_k}
\end{split}
\end{equation*}
with equality if and only if  for each $k=1,\dots, M$,
\begin{equation*}
\left(\frac{x(t)}{\overline{x}}\right)^{y_k'}=\left(\frac{x(t-\tau_k)}{\overline{x}}\right)^{y_k}.
\end{equation*}
From this, we find that
\begin{equation*}
\begin{split}
\dot{V}(x_t) &\leq\sum_{k = 1}^{M} \kappa_{k}\,\overline{x}^{y_k} \,\left[
\left(\frac{x(t)}{\overline{x}}\right)^{y_k'} - \left(\frac{x(t)}
{\overline{x}}\right)^{y_k}\right]\\
&= \sum_{\eta \in \mathcal{K}} \left(\frac{x(t)}{\overline{x}}\right)^{\eta}\left[ \sum_{k:\eta = y_k'}\kappa_k \overline{x}^{y_k} - \sum_{k:\eta = y_k} \kappa_k \overline{x}^{y_k} \right] = 0,
\end{split}
\end{equation*}
where the last equality follows from the complex balanced property~\eqref{eq:complex_balanced}.
This implies that the complex balanced equilibrium~$\overline{x}$ of \eqref{eq:delay_crn} is Lyapunov stable.

Choose $\epsilon$ such that $0<\epsilon<\min_{1\leq i\leq N}\overline{x}_i$ so that
$\hat{\mathcal{B}}_\epsilon(\overline{x})\subset\mathcal{C}_{+}$. The Lyapunov stability of the equilibrium~$\overline{x}$ implies the existence of $\delta>0$ such that if 
$\theta\in\hat{\mathcal{B}}_\delta(\overline{x})$, then $x^\theta_t\in\hat{\mathcal{B}}_\epsilon(\overline{x})$ for all $t\geq0$. We will show that for every 
$\theta\in\hat{\mathcal{B}}_\delta(\overline{x})$ the solution $x^\theta(t)$ converges to a Lyapunov stable equilibrium of~\eqref{eq:delay_crn}. Let 
$$
\mathcal{R}=\{\,\psi\in\hat{\mathcal{B}}_\epsilon(\overline{x})\mid\dot V(\psi)=0\,\}.
$$
From the previous calculations, we find that
\begin{equation*}
\mathcal{R} = \biggl\{\,\psi\in\hat{\mathcal{B}}_\epsilon(\overline{x})\mid \left(\frac{\psi(0)}{\overline{x}}\right)^{y_k'}=\left(\frac{\psi(-\tau_k)}{\overline{x}}\right)^{y_k}\quad\text{for $k=1,\dots,M$}\biggr\} .
\end{equation*}
Let $\mathcal{M}$ be the largest set in~$\mathcal{R}$ which is invariant under Eq.~\eqref{eq:delay_crn}. We will show that every element of~$\mathcal{M}$ is a positive equilibrium of~ \eqref{eq:delay_crn}. Let $\psi\in\mathcal{M}$ and write $x=x^\psi$ for brevity. Rewrite Eq.~\eqref{eq:delay_crn} in the form
\begin{equation*}
\dot{x}(t) = \sum_{\eta \in \mathcal{K}} \left[\sum_{k:\eta=y_k'} \kappa_k \overline{x}^{y_k}\left(\frac{x(t-\tau_k)}{\overline{x}}\right)^{y_k} - \sum_{k:\eta=y_k} \kappa_k \overline{x}^{y_k}\left(\frac{x(t)}{\overline{x}}\right)^{y_k} \right] \eta.
\end{equation*}
Since $\mathcal{M}\subset\mathcal{R}$ is invariant, we have that $x_t\in\mathcal{R}$ for all $t\geq0$ and hence
\begin{equation*}
\dot{x}(t)=\sum_{\eta \in \mathcal{K}} \left(\frac{x(t)}{\overline{x}}\right)^{y_k}\left[\sum_{k:\eta=y_k'} \kappa_k \overline{x}^{y_k} - \sum_{k:\eta=y_k} \kappa_k \overline{x}^{y_k} \right] \eta = 0
\end{equation*}
for $t\geq0$, where the last equality is a consequence of the complex balanced property \eqref{eq:complex_balanced}. Thus, $x=x^\psi$ is a constant solution of~\eqref{eq:complex_balanced} and hence $\psi\equiv\tilde x$ is a positive equilibrium. Now suppose that $\theta\in\hat{\mathcal{B}}_\delta(\overline{x})$. As noted before, $x^\theta_t\in\hat{\mathcal{B}}_\epsilon(\overline{x})$ for all $t\geq0$. By the application of LaSalle's invariance principle \cite{Smith2011}, we conclude that $\omega(\theta)\subset\mathcal{M}$, where 
$
\omega(\theta)=\{\,\phi\in\mathcal{C}\mid \text{there exists $t_n\rightarrow\infty$ such that 
$x^\theta_{t_n}\rightarrow\phi$}\,\}
$
is the omega limit set. On the other hand, since $\theta\in\mathcal{D}_\theta$ and according to Theorem~\ref{thm:invariance} the stoichiometric class $\mathcal{D}_\theta$ is closed and invariant, it follows that $\omega(\theta)\subset\mathcal{D}_\theta$. Thus,
 $\omega(\theta)\subset\mathcal{M}\cap\mathcal{D}_\theta$. As shown before, every element of~$\mathcal{M}$ is a positive equilibrium of~\eqref{eq:delay_crn}, while Theorem~\ref{thm:unique} implies that $\mathcal{D}_\theta$ contains at most one positive equilibrium. Hence $\omega(\theta)=\{\tilde x\}$ for some $\tilde x\in\mathcal{E}$ and $x^\theta(t)\rightarrow\tilde x$ as $t\to\infty$. The Lyapunov stability of the positive equilibrium~$\tilde x$ follows from the first part of the proof.
\end{proof}

\begin{remark}{\rm In the previous proof we have shown that for every positive initial function $\theta$ from a neighborhood of the positive complex balanced equilibrium~$\overline{x}$ of~\eqref{eq:delay_crn} the stoichiometric class~$\mathcal{D}_\theta$ contains 
exactly one positive equilibrium. A simple modification of the above proof can be used to show that if system \eqref{eq:delay_crn} is comlexed balanced then $\mathcal{D}_\theta$ contains exactly one positive equilibrium whenever the closure of the forward orbit $\mathcal{O}^+_{\theta}=\{\,x^\theta_t\mid t\geq0\,\}$  remains in~$\mathcal{C}_{+}$. This last condition certainly holds if the solution $x^\theta$ is persistent in the sense that 
$\liminf_{t\to\infty}x^\theta_i(t)>0$ for each $i=1,\dots,N$.}

\end{remark}

Let $\overline{x}$ be a positive complex balanced equilibrium of Eq.~\eqref{eq:delay_crn}.  Theorem~\ref{thm:unique} implies that $\overline{x}$ is the only positive equilibrium in its positive stoichiometric compatibility class~$\mathcal{D}_{\overline{x}}$. This, together with Theorem~\ref{thm:semistability} yields the following analogue of a known result for ordinary kinetic systems.

\begin{thm}\label{thm:relative_stability}
Every positive complex balanced equilibrium $\overline{x}$ of the delayed kinetic system \eqref{eq:delay_crn} is locally asymptotically stable relative to its positive stoichiometric compatibility class~$\mathcal{D}_{\overline{x}}$.
\end{thm}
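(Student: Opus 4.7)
The plan is to piece together the local asymptotic stability from three already-established facts: the semistability established in Theorem~\ref{thm:semistability}, the uniqueness of positive equilibria within a stoichiometric class (Theorem~\ref{thm:unique}), and the positive invariance of $\mathcal{D}_{\overline{x}}$ under \eqref{eq:delay_crn} (Theorem~\ref{thm:invariance}). The two things to verify are Lyapunov stability relative to $\mathcal{D}_{\overline{x}}$ and local attractivity relative to $\mathcal{D}_{\overline{x}}$.

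First I would observe that Lyapunov stability relative to $\mathcal{D}_{\overline{x}}$ is immediate: Theorem~\ref{thm:semistability} already furnishes Lyapunov stability of $\overline{x}$ as an equilibrium of~\eqref{eq:delay_crn} in the full space $\mathcal{C}_{+}$, so restricting the admissible initial functions to $\mathcal{D}_{\overline{x}}$ preserves the defining $\varepsilon$--$\delta$ condition.

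For the attractivity part, I would take the $\delta>0$ furnished by Theorem~\ref{thm:semistability} and pick any $\theta \in \mathcal{D}_{\overline{x}}\cap\hat{\mathcal{B}}_\delta(\overline{x})$. Semistability ensures that $x^\theta(t)$ converges to some Lyapunov stable equilibrium $\tilde{x}$ of~\eqref{eq:delay_crn} as $t \to \infty$; a glance at the proof of Theorem~\ref{thm:semistability} confirms that this $\tilde x$ is a \emph{positive} equilibrium (it is extracted from the invariant set $\mathcal{M}\subset\mathcal{R}$, whose elements were shown to be positive equilibria). Next, Theorem~\ref{thm:invariance} gives $x^\theta_t\in \mathcal{D}_\theta=\mathcal{D}_{\overline{x}}$ for all $t\ge 0$, and since $\mathcal{D}_{\overline{x}}$ is closed, the constant function equal to~$\tilde x$, being the limit of $x^\theta_t$ in $\mathcal{C}$, also lies in $\mathcal{D}_{\overline{x}}$; identifying it with the vector $\tilde x$ yields $\tilde x \in \mathcal{D}_{\overline{x}}$. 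Theorem~\ref{thm:unique} then forces $\tilde x = \overline{x}$, and hence $x^\theta(t)\to\overline{x}$.

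Combining these two observations delivers local asymptotic stability of $\overline{x}$ relative to $\mathcal{D}_{\overline{x}}$. There is no real obstacle; the only subtlety that needs a careful line is matching the statement that ``$x^\theta(t)\to\tilde x$ in $\mathbb{R}^N$'' with ``the constant segment $\tilde x$ belongs to $\mathcal{D}_{\overline{x}}\subset\mathcal{C}_{+}$'', which uses closedness of $\mathcal{D}_{\overline{x}}$ in $\mathcal{C}$ together with uniform convergence of $x^\theta_t$ on $[-\tau,0]$ to the constant function~$\tilde x$. Everything else is a direct invocation of the preceding theorems.
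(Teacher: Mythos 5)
Your proposal is correct and follows essentially the same route as the paper, which derives Theorem~\ref{thm:relative_stability} directly from Theorem~\ref{thm:unique} (uniqueness of the positive equilibrium in $\mathcal{D}_{\overline{x}}$) combined with Theorem~\ref{thm:semistability}, using the invariance and closedness of $\mathcal{D}_{\overline{x}}$ from Theorem~\ref{thm:invariance} to pin the limit equilibrium down to $\overline{x}$. In fact you spell out the limit-identification step (that the constant segment $\tilde x$ lies in the closed invariant set $\mathcal{D}_{\overline{x}}$) more explicitly than the paper's own terse two-sentence justification.
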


\section{Example}
In this section, we will illustrate our results and notations on a simple example. The studied system is intentionally low dimensional in order to be able to simply illustrate the relations and differences between non-delayed and delayed kinetic systems.

Let the time delayed complex balanced kinetic system be given by a reversible
reaction $2X_1 \leftrightarrows X_2$ containing one undelayed and a delayed reaction as follows
\begin{equation*}
2X_1 \xrightarrow{\kappa_1=1} X_2,~~ X_2 \xrightarrow{\kappa_2=2, \tau_2} 2X_1.
\end{equation*}
Then, the corresponding time-delay differential equation is
\begin{equation}
\label{eq:exmp_1_dde}
\begin{split}
\dot{x}(t) &= 1\left((x_1(t))^2\left[\begin{array}{c}0\\1\end{array}\right] - (x_1(t))^2\left[\begin{array}{c}2\\0\end{array}\right]\right) \\
&+ 2 \left(x_2(t-\tau_2)\left[\begin{array}{c}2\\0\end{array}\right] - x_2(t)\left[\begin{array}{c}0\\1\end{array}\right]\right),
\end{split}
\end{equation}
where $x =[x_1,x_2]^T\in\overline{\mathbb{R}}^{2}_{+}$ are the states and $\tau_2$ is the time delay of the second reaction. It is easily verified that 
$[2, 2]^T$ is a positive complex balanced equilibrium of~ \eqref{eq:exmp_1_dde}.
The stoichiometric subspace is
\begin{equation*}
 \mathcal{S} = \text{span}\left\{\,[-2,1]^T\,\right\}\qquad\text{and}
 \qquad\mathcal{S}^\perp = \text{span}\left\{\,[1,2]^T\,\right\}.
\end{equation*}
The dimension of $S$ is one, therefore Eq.~\eqref{eq:exmp_1_dde} has infinitely many positive equilibria given by the set
\begin{equation}
\mathcal{E} = \left\{\overline{x} \in\mathbb{R}^{2}_{+} ~\mid~
\left[\begin{array}{r}
\ln(\overline{x}_1) - \ln(2) \\
\ln(\overline{x}_2) - \ln(2) \end{array} \right] \in \mathcal{S}^{\perp} \right\}.
\end{equation}
For $\overline{x}\in\mathcal{E}$, consider the set $\mathcal{X}_{\overline{x}}$ of those positive constant functions  which belong to~$\mathcal{D}_{\overline{x}}$:
\begin{equation}
\mathcal{X}_{\overline{x}} = \left\{\eta \in\mathbb{R}^{2}_{+} ~\mid~
\left[\begin{array}{r}\eta_1 - \overline{x}_1\\(1+ 2\tau_2)(\eta_2 - \overline{x}_2)\end{array}\right]\in \mathcal{S}
 \right\}.
\end{equation}
According to Theorem \ref{thm:relative_stability}, if $\theta\equiv\eta \in \mathcal{X}_{\overline{x}}$ is close to the equilibrium $\overline{x}\in\mathcal{E}$, then $x^\theta(t)\rightarrow \overline{x}$ as $t\to\infty$. 

Figure \ref{fig:exmp1_fig1} shows the phase portrait of the system \eqref{eq:exmp_1_dde} with $\tau_2 = 0.5$ and with different constant initial conditions. The initial conditions are chosen such that the corresponding solutions converge to three different equilibria.  Figure~\ref{fig:exmp1_fig2} shows the time domain behavior of the system \eqref{eq:exmp_1_dde} when there are different time delays, but the initial conditions are same.
\begin{figure}
\center
\includegraphics[scale=0.6]{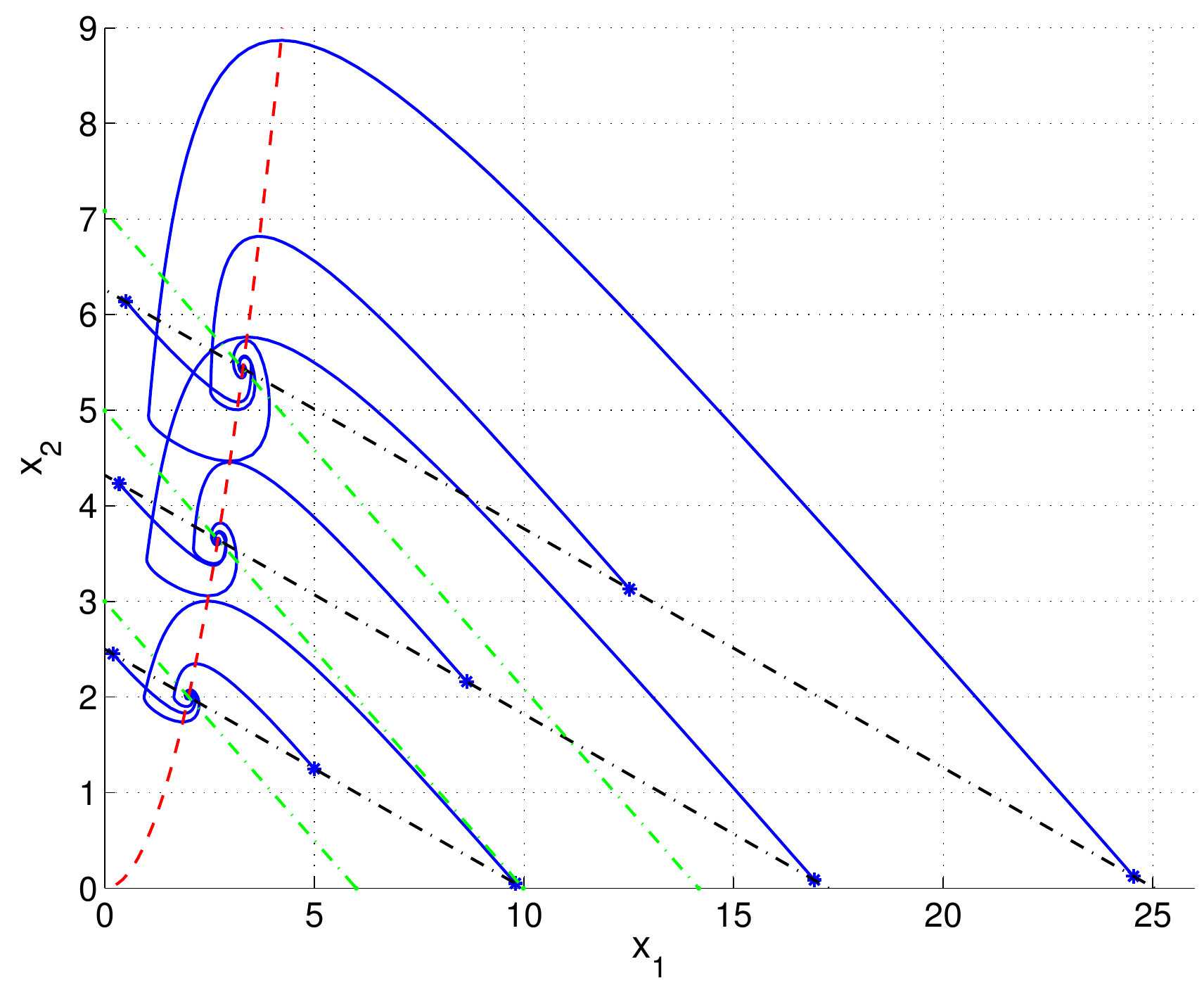}
\caption{
\label{fig:exmp1_fig1}The phase portrait of the system \eqref{eq:exmp_1_dde} with $\tau_2 = 0.5$. The red dash curve shows the equilibrium set $\mathcal{E}$ of the network. The black dash-dot lines show the set of points for which the corresponding constant initial functions result in the same equilibrium point. The green dashed lines show three stoichiometric compatibility classes of the non-delayed network having the same structure and reaction rate coefficients as the delayed one. The blue curves show the solution trajectories of \eqref{eq:exmp_1_dde} with different constant initial functions.}
\end{figure}

\begin{figure}
\center
\includegraphics[scale=0.6]{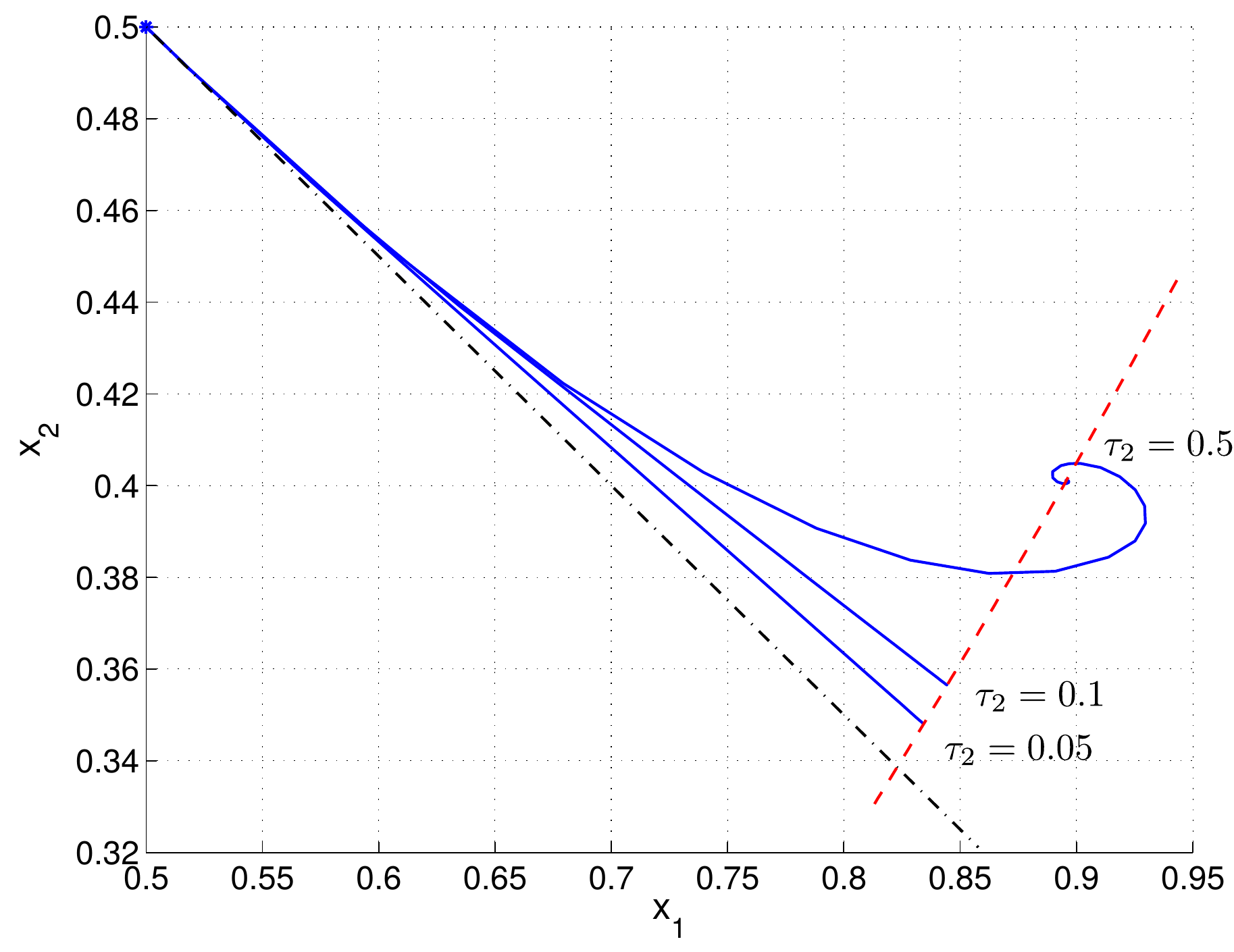}
\caption{\label{fig:exmp1_fig2} The phase portrait of the system \eqref{eq:exmp_1_dde} with different time delays $\tau_2 = \left\{0.05, 0.1, 0.5 \right\}$ and with the same constant initial function defined by $\eta = \left[ 0.5~ 0.5\right]^T$. The red dashed line shows the equilibrium set $\mathcal{E}$ of the network. The black dash-dot line shows the positive stoichiometric compatibility class of the undelayed system having the same structure and reaction rate coefficients as the delayed one. The blue curves show the solution trajectories of \eqref{eq:exmp_1_dde} with different time delays.}
\end{figure}

\section{Conclusions}
In this paper a class of delayed kinetic systems is introduced, where different constant time-delays can be assigned to the individual reactions of the network. The complex balance property is defined for this system in a straightforward way. It is shown that the equilibrium solutions of complex balanced kinetic systems can be directly obtained from the equilibria of the corresponding non-delayed kinetic system. Therefore, the complex balance property of a delayed network can be checked in the same way as in the non-delayed model. The notion of stoichiometric compatibility classes is extended to delayed networks. It is shown that contrary to the classical mass action case, these classes are no longer linear manifolds in the state space. The uniqueness of equilibrium solutions within a time delayed positive stoichiometric compatibility class is proved for delayed complex balanced models. By introducing a logarithmic Lyapunov-Krasovskii functional and using LaSalle's invariance principle, the semistability of equilibrium solutions in complex balanced systems with arbitrary time delays is also proved. As a consequence, a positive complex balanced equilibrium is always locally asymptotically stable relative to its positive stoichiometric class. The obtained results further underline the significance of the complex balance principle in the theory of dynamical systems. In the light of \cite{Anderson2011} and \cite{Craciun2015}, an interesting question is whether the asymptotic stability of a delayed complex balanced system is global relative to its positive stoichiometric class.

\section*{Acknowledgements}
We acknowledge the fruitful discussions with Professor Ferenc Hartung and Dr. Tamás Péni. This research has been supported by the National Research, Development and Innovation Office through grants K115694 and K120186. G. Sz. acknowledges the support of the grant PPKE KAP-1.1-16-ITK.


\begin{thebibliography}{10}
\expandafter\ifx\csname url\endcsname\relax
  \def\url#1{\texttt{#1}}\fi
\expandafter\ifx\csname urlprefix\endcsname\relax\def\urlprefix{URL }\fi
\expandafter\ifx\csname href\endcsname\relax
  \def\href#1#2{#2} \def\path#1{#1}\fi

\bibitem{Erdi1989}
P.~Érdi, J.~Tóth, Mathematical Models of Chemical Reactions. Theory and
  Applications of Deterministic and Stochastic Models, Manchester University
  Press, Princeton University Press, Manchester, Princeton, 1989.

\bibitem{Haddad2010}
W.~M. Haddad, V.~Chellaboina, Q.~Hui, Nonnegative and Compartmental Dynamical
  Systems, Princeton University Press, 2010.

\bibitem{Walter1999}
G.~G. Walter, M.~Contreras, Compartmental Modeling With Networks, Springer
  Science, 1999.

\bibitem{Volpert1972}
A.~I. Vol'pert, Differential equations on graphs, Mathematics of the {USSR} -
  Sbornik 17 (1972) 571--582.

\bibitem{Osipenko2007}
G.~Osipenko, Dynamical Systems, Graphs, and Algorithms, Springer, 2007.

\bibitem{Horn1972}
F.~Horn, R.~Jackson, General mass action kinetics, Archive for Rational
  Mechanics and Analysis 47 (1972) 81--116.

\bibitem{Feinberg1979}
M.~Feinberg, Lectures on chemical reaction networks, Notes of lectures given at
  the Mathematics Research Center, University of Wisconsin, 1979.

\bibitem{Angeli2009}
D.~Angeli, A tutorial on chemical network dynamics, European Journal of Control
  15 (2009) 398--406.

\bibitem{Feinberg1987}
M.~Feinberg, Chemical reaction network structure and the stability of complex
  isothermal reactors - {I}. {T}he deficiency zero and deficiency one theorems,
  Chemical Engineering Science 42 (10) (1987) 2229--2268.

\bibitem{Shinar2010}
G.~Shinar, M.~Feinberg, Structural sources of robustness in biochemical
  reaction networks, Science 327 (2010) 1389--1391.

\bibitem{Horn1972a}
F.~Horn, Necessary and sufficient conditions for complex balancing in chemical
  kinetics, Archive for Rational Mechanics and Analysis 49 (1972) 172--186.

\bibitem{Feinberg1972}
M.~Feinberg, Complex balancing in general kinetic systems, Archive for Rational
  Mechanics and Analysis 49 (1972) 187--194.

\bibitem{Dickenstein2010}
A.~Dickenstein, M.~P. Millan, How far is complex balancing from detailed
  balancing?, Tech. rep., arXiv:1001.0947v1 [math.DS],
  http://arxiv.org/pdf/1001.0947v1 (2010).

\bibitem{Gorban2013}
A.~N. Gorban, Local equivalence of reversible and general markov kinetics,
  Physica {A}: Statistical Mechanics and its Applications 392~(5) (2013)
  1111--1121.

\bibitem{Son:2001}
E.~Sontag, Structure and stability of certain chemical networks and
  applications to the kinetic proofreading model of {T}-cell receptor signal
  transduction, {IEEE} Transactions on Automatic Control 46 (2001) 1028--1047.

\bibitem{Anderson2011}
D.~F. Anderson, A proof of the {Global Attractor Conjecture} in the single
  linkage class case, {SIAM} Journal on Applied Mathematics 71 (2011)
  1487--1508, http://arxiv.org/abs/1101.0761,.

\bibitem{Craciun2015}
G.~Craciun, Toric differential inclusions and a proof of the global attractor
  conjecture, arXiv:1501.02860 [math.DS] (January 2015).

\bibitem{Liptak2015}
G.~Lipták, G.~Szederkényi, K.~M. Hangos, Computing zero deficiency
  realizations of kinetic systems, Systems \& Control Letters 81 (2015) 24--30.
\newblock \href {http://dx.doi.org/10.1016/j.sysconle.2015.05.001}
  {\path{doi:10.1016/j.sysconle.2015.05.001}}.

\bibitem{Liptak2016}
G.~Lipták, G.~Szederkényi, M.~Hangos, Kinetic feedback design for polynomial
  systems, Journal of Process Control 41 (2016) 56--66.
\newblock \href {http://dx.doi.org/10.1016/j.jprocont.2016.03.002}
  {\path{doi:10.1016/j.jprocont.2016.03.002}}.

\bibitem{Stepan1989}
G.~Stépán, Retarded Dynamical Systems: Stability and Characteristic
  Functions, Harlow: Longman Scientific and Technical, 1989.

\bibitem{Fridman2014}
E.~Fridman, Introduction to Time-Delay Systems, Birkhauser, 2014.

\bibitem{Chellaboina2008}
V.~Chellaboina, W.~M. Haddad, Q.~Hui, J.~Ramakrishnan, On system state
  equipartitioning and semistability in network dynamical systems with
  arbitrary time-delays, Systems and Control Letters 57 (2008) 670--679.

\bibitem{Smith1995}
H.~L. Smith, Monotone Dynamical Systems. An Introduction to the Theory of
  Competitive and Cooperative Systems, Vol.~41 of Mathematical Surveys and
  Monographs, American Mathematical Society, Providence, Rhode Island, 1995.

\bibitem{vanderSchaft2015}
A.~van~der Schaft, S.~Rao, B.~Jayawardhana,
  \href{http://dx.doi.org/10.1007/s10910-015-0498-2}{Complex and detailed
  balancing of chemical reaction networks revisited}, Journal of Mathematical
  Chemistry 53~(6) (2015) 1445--1458.
\newblock \href {http://dx.doi.org/10.1007/s10910-015-0498-2}
  {\path{doi:10.1007/s10910-015-0498-2}}.
\newline\urlprefix\url{http://dx.doi.org/10.1007/s10910-015-0498-2}

\bibitem{Hale1993}
J.~K. Hale, S.~M.~V. Lunel, Introduction to Functional Differential Equations,
  Springer, New York, 1993.

\bibitem{Smith2011}
H.~L. Smith, An Introduction to Delay Differential Equations with Applications
  to the Life Sciences, Springer, New York, 2011.

\bibitem{Anderson2014}
D.~F. Anderson, \href{http://www.math.wisc.edu/~anderson/CRNT_Lyapunov.pdf}{A
  short note on the {Lyapunov} function for complex-balanced chemical reaction
  networks}, unpublished note (2014).
\newline\urlprefix\url{http://www.math.wisc.edu/~anderson/CRNT_Lyapunov.pdf}

\end{thebibliography}

\section*{Appendix}
We give a proof of inequality~\eqref{eq:log_ineq}. Let $b>0$ be fixed. For $x>0$, define
\begin{equation*}
f(x)=x\left[\,\ln(x)-\ln(b)- 1\right] + b
\end{equation*}
and
\begin{equation*}
g(x)=\ln\bigl[1 + (x-b)^2\bigr].
\end{equation*}
Since $f'(x)=\ln(x)-\ln(b)$ for $x>0$, $f'<0$ on $(0,b)$ and $f'>0$ on $(b,\infty)$. This implies that~$f$ has a strict minimum at $x=b$. Hence
$f(x)>f(b)=0$ for $x\in(0,b)\cup(b,\infty)$. Clearly, the same inequality holds for~$g$. A repeated application of l'Hospital's rule yields
\begin{equation*}
\lim_{x\to b}\frac{f(x)}{g(x)}=\frac{1}{2b}.
\end{equation*}
Therefore the function $h:(0,\infty)\rightarrow\mathbb{R}$ defined by
\begin{equation*}
h(x)=\begin{cases}\dfrac{f(x)}{g(x)}&\qquad\text{for $x\in(0,b)\cup(b,\infty)$}\\
\dfrac{1}{2b}&\qquad\text{for $x=b$}
\end{cases}
\end{equation*}
is positive and continuous. Since $x\ln(x)\rightarrow0$ as $x\to0+$, $h$ can be extended continuously to the interval $[0,\infty)$ by
\begin{equation*}
h(0)=\lim_{x\to0+}h(x)=\frac{b}{\ln(1+b^2)}.
\end{equation*}
Since $\lim_{x\to\infty}h(x)=\infty$, there esists~$T>0$ such that $h(x)>h(0)$ for all $x> T$. The continuity of~$h$ implies the existence of
$c=\min_{0\leq x\leq T}h(x)$. Since $h(x)>h(0)\geq c>0$ for $x>T$, we have that $h(x)\geq c$ for all $x\geq0$ which implies the desired inequality~\eqref{eq:log_ineq}.

\end{document}